\title{A dimension-free Bernstein-type inequality\\for self-normalised martingales}
\author{\name Arya Akhavan \email arya.akhavan@stats.ox.ac.uk \\
      \addr Department of Statistics\\University of Oxford
      \AND
      \name Amitis Shidani \email amitis.shidani@stats.ox.ac.uk \\
      \addr Department of Statistics\\University of Oxford
      \AND
      \name Alex Ayoub \email aayoub@ualberta.ca\\
      \addr Department of Computing Science\\ University of Alberta
      \AND
      \name David Janz \email david.janz@stats.ox.ac.uk\\
      \addr Department of Statistics\\University of Oxford
      }
\begin{document}

\maketitle

\begin{abstract}
We introduce a dimension-free Bernstein-type tail inequality for self-normalised martingales, where the normalisation uses the predictable quadratic variation and the radius depends on the information gain of the observed covariance. As applications, we provide ellipsoidal confidence sequences for logistic regression with adaptively chosen Hilbert-valued covariates, and give instance-adaptive regret bounds for Hilbert-armed logistic bandits.
\end{abstract}

\section{Introduction}

Let $(\Omega, \cF, \bP)$ be a probability space equipped with a filtration $(\cF_n)_{n \in \N}$. Let $\cH$ be a separable Hilbert space with inner product $\langle \cdot, \cdot \rangle$, norm $\|\cdot\|$, closed unit ball $B$ and identity operator $I$.

Consider a predictable $B$-valued sequence $(X_n)_{n \in \Np}$ and adapted real-valued sequence $(Y_n)_{n \in \Np}$ satisfying
\[
  \E [Y_n \mid \cF_{n-1}] = 0 \spaced{and} |Y_n| \leq 1 \quad \text{a.s. for all $n \in \Np$}\,.
\]
Define the martingale sequence
\[
  M_n = \sum_{j=1}^n Y_j X_j\,, \quad n \in \Np\,,
\]
along with its predictable and worst-case quadratic variation processes, defined respectively as
\[
  Q_n = \sum_{j=1}^n \E[Y_j^2 \mid \cF_{j-1}] (X_j \otimes X_j) \spaced{and} V_n = \sum_{j=1}^n X_j \otimes X_j\,.
\]
In this paper, we provide high-probability time-uniform control of the self-normalised process
\[
  Z_n(\rho) = \norm{(Q_n + \rho I)^{-\frac{1}{2}} M_n}\,,
\]
where $\rho \geq 1$ is a regularisation parameter, and demonstrate how this bound fits into the analyses of logistic regression with adaptively chosen covariates and logistic bandits. Since  $Z_n(\rho)$ is normalised by the predictable variance $Q_n$ and not the worst-case upper bound $V_n$, we refer to our result as Bernstein-type.

\section{Concentration result}
Our upper bound will be stated in terms of information gain \citep{seeger2008information}. For a trace class operator $A$ on $\cH$, define the information gain functional
\[
   \gamma(A) = \frac{1}{2} \log\det(I+ A)\,,
\]
where $\det$ denotes the Fredholm determinant. Our main concentration result is then as follows.

\begin{theorem}[Fixed $\rho$ result]\label{thm:bernstein-fixed}
    Fix $\rho\ge 1$. For every $y>0$,
    \[
        \P[\bigg]{\exists n \in \Np \colon
        Z_n(\rho)
        >
        \sqrt{\rho}
        +
        \frac{78\gamma(\rho^{-1} V_n)+4y}{\sqrt{\rho}}
        +
        \sqrt{18\gamma(\rho^{-1}V_n)+6y}} \leq 2e^{-y}\,.
    \]
\end{theorem}

Observe that setting $\rho \approx \gamma(\rho^{-1} V_n)$ would yield a right-hand side scaling with $\sqrt{\gamma(\rho^{-1} V_n)}$. We can achieve this while retaining the time-uniform nature of the bound at an $\iota_n := 1+ \log \log(en)$ cost. For this, define
\[
  \gamma^\star_n = \inf \{ \rho \geq 1 \colon \rho \geq \gamma(\rho^{-1}V_n)\} \spaced{and let} Z^\star_n := Z_n(\gamma^\star_n)\,.
\]
In particular, we have the following corollary controlling $Z^\star_n$.

\begin{corollary}[Adaptive result]\label{thm:bernstein}
    For all $y > 0$,
    \[
      \P[\bigg]{\exists n \in \Np \colon
      Z^\star_n
      >
      100\roundb[\Big]{\sqrt{\gamma^\star_n + y + \iota_n}
      + \frac{y + \iota_n}{\sqrt{\gamma^\star_n}}}} \leq e^{-y}\,.
    \]
\end{corollary}
The corollary follows by a union bound over a geometric grid of regularisers. Constants here and in the fixed-$\rho$ result can be made significantly tighter at the cost of more cumbersome statements and proofs.

\subsection{Proof overview} Our proof is based on adaptively splitting the martingale into small and large directions.

\begin{definition}
Let $M=(M_n)_{n\in\N}$ be an $\cH$-valued martingale. An \emph{admissible predictable support} for $M$ is a nested sequence
$(\cU_n)_{n\in\N}$ of finite-dimensional random subspaces of $\cH$ such that $\cU_0=\{0\}$,
the orthogonal projection onto $\cU_n$ is $\cF_{n-1}$-measurable for each $n\in\Np$, and
\[
  \cU_{n-1}\subseteq \cU_n
  \spaced{and}
  M_n\in\cU_n
  \quad \text{a.s. for all $n\in\Np$}.
\]
\end{definition}

The next result is an adaptation of the Bernstein-type inequality for self-normalised processes on $\Rd$ of \citet{faury2020improved} to martingales supported on a predictable sequence of finite-dimensional subspaces.

\begin{theorem}[name={},restate=headBoundThm]\label{thm:bernstein-predictable-dim}
    Suppose that $M$ admits an admissible predictable support $(\cU_n)_{n\in\N}$, and write
    $D_n=\dim(\cU_n)$. Then, for any $y>0$ and $\rho \ge 1$,
    \[
        \P[\bigg]{\exists n \in \Np \colon
        \norm{(Q_n + \rho I)^{-\frac{1}{2}}M_n} >
        \frac{1}{2}\sqrt{\rho}
        +
        \frac{2}{\sqrt{\rho}}\roundb[\big]{\gamma(\rho^{-1}Q_n)+2D_n+y}} \leq e^{-y}\,.
    \]
\end{theorem}
The proof of \cref{thm:bernstein-predictable-dim} extends the finite-dimensional result of \citet{faury2020improved} through a simple stopping time argument.

\begin{theorem}[\citet{abbasi2013online}, Corollary 3.5]\label{thm:hoeffding-fixed}
    For all $y>0$ and $\rho > 0$,
    \[
      \P{\exists n \in \Np \colon \norm{(V_n + \rho I)^{-\frac{1}{2}}M_n} > \sqrt{2(\gamma(\rho^{-1}V_n) + y)}} \leq e^{-y}\,.
    \]
\end{theorem}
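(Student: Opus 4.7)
The approach is the standard method of mixtures with a Gaussian mixing measure, adapted to the infinite-dimensional setting through a predictable sequence of finite-dimensional subspaces. First, since $Y_j$ is conditionally centred with $|Y_j| \leq 1$, Hoeffding's lemma applied to $t \mapsto \E[\exp(t Y_j) \mid \cF_{j-1}]$ with $t = \langle \lambda, X_j\rangle$ yields, for any $\lambda \in \cH$,
\[
  \E[\exp(\langle \lambda, Y_j X_j\rangle) \mid \cF_{j-1}] \leq \exp\bigl(\tfrac{1}{2}\langle \lambda, (X_j \otimes X_j)\lambda\rangle\bigr).
\]
It follows that for each fixed $\lambda$, the process $N_n^\lambda := \exp(\langle \lambda, S_n\rangle - \tfrac{1}{2}\langle \lambda, V_n \lambda\rangle)$ is a nonnegative supermartingale with $N_0^\lambda = 1$.

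Second, I would mix $N^\lambda$ against a centred Gaussian with covariance $\rho^{-1} I$. Since a centred Gaussian with identity covariance is not a well-defined measure on an infinite-dimensional $\cH$, I instead restrict the mixing to the predictable finite-dimensional subspace $H_n := \mathrm{span}\{X_1,\ldots,X_n\}$, which is $\cF_{n-1}$-measurable and contains both $S_n$ and the range of $V_n$. Let $\pi_n$ be the Gaussian measure on $H_n$ with covariance $\rho^{-1} I|_{H_n}$; this family is consistent in the sense that $\pi_n|_{H_{n-1}} = \pi_{n-1}$, which is precisely the predictable, consistent mixing-measure construction underlying \cref{sec:predictable}. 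By Tonelli and the consistency of the $\pi_n$, the mixed process
\[
  \bar N_n := \int_{H_n} N_n^\lambda\, d\pi_n(\lambda)
\]
is again a nonnegative supermartingale with $\bar N_0 = 1$.

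Third, a direct Gaussian calculation on $H_n$ (completing the square in $\lambda$ and using $\det(V_n + \rho I) = \rho^{d_n}\det(I + \rho^{-1} V_n)$ on the $d_n$-dimensional space $H_n$) evaluates the mixture explicitly:
\[
  \bar N_n = \frac{1}{\det(I + \rho^{-1} V_n)^{1/2}} \exp\bigl(\tfrac{1}{2}\|(V_n + \rho I)^{-1/2} S_n\|^2\bigr) = \exp\bigl(\tfrac{1}{2}\|(V_n + \rho I)^{-1/2} S_n\|^2 - \gamma(\rho^{-1} V_n)\bigr),
\]
with all determinants interpreted on the finite-dimensional range of $V_n$. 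Ville's maximal inequality applied to $\bar N$ then gives $\P(\exists n: \bar N_n \geq e^y) \leq e^{-y}$, and rearranging the event $\{\bar N_n \geq e^y\}$ as $\{\|(V_n+\rho I)^{-1/2} S_n\| \geq \sqrt{2(\gamma(\rho^{-1}V_n) + y)}\}$ yields exactly the claimed bound.

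The only real subtlety is the second step---executing the method of mixtures with a Gaussian-like mixing measure on an infinite-dimensional space---and this is resolved by the same predictable, consistent restriction to $H_n$ that the paper uses to prove \cref{thm:bernstein-predictable-dim}. The Hoeffding case is in fact the easier one, since we only need the subgaussian single-$\lambda$ exponential bound rather than the subgamma-type bound that drives the Bernstein inequality.
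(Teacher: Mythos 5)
Your proof is correct, and it is worth noting up front that the paper does not actually prove \cref{thm:hoeffding-fixed} --- it imports it verbatim as Corollary 3.5 of \citet{abbasi2013online} --- so there is no in-paper argument to compare against line by line. What you have done is reconstruct the standard method-of-mixtures proof of the cited result, and your resolution of the infinite-dimensional obstruction (no isotropic Gaussian on $\cH$) is exactly the device the paper itself develops for its Bernstein analogue: the predictable, consistent sequence of mixing measures of \cref{claim:mixed-positive-sup} in \cref{appendix:predictable-dim}. Two minor remarks. First, the supermartingale step for the mixture needs, beyond consistency, the observation that $N_{n-1}^\lambda$ depends on $\lambda$ only through its orthogonal projection onto $H_{n-1}$ (which holds since $S_{n-1} \in H_{n-1}$ and the range of $V_{n-1}$ lies in $H_{n-1}$); with that, $\int N_{n-1}^\lambda \,\mathrm{d}\pi_n(\lambda) = \int N_{n-1}^\lambda \,\mathrm{d}\pi_{n-1}(\lambda)$ follows, mirroring the corresponding line in the proof of \cref{claim:mixed-positive-sup}. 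Relatedly, your ``$\pi_n|_{H_{n-1}} = \pi_{n-1}$'' should be read as the pushforward of $\pi_n$ under projection onto $H_{n-1}$ --- the literal restriction of $\pi_n$ to the lower-dimensional subspace $H_{n-1}$ is the zero measure whenever $\dim H_n > \dim H_{n-1}$ --- though the paper uses the same loose terminology, so no harm done. Second, your closing observation is exactly right and is the genuine structural difference from the Bernstein case: since the subgaussian bound on $N^\lambda$ is global in $\lambda$, you may mix against the full, untruncated Gaussian and evaluate the integral in closed form, giving $\bar N_n = \exp\bigl(\tfrac{1}{2}\|(V_n+\rho I)^{-1/2}S_n\|^2 - \gamma(\rho^{-1}V_n)\bigr)$ exactly; the Bernstein supermartingale property holds only on the unit ball, which forces the truncated mixture, the lower bound of \cref{claim:m-bar-lb}, and the normalising-ratio control of \cref{prop:log-normalising-bound}. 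Your Hoeffding-lemma step (variance proxy $1/2$ for mean-zero $[-1,1]$-valued increments), the determinant identity $\det(V_n+\rho I) = \rho^{d_n}\det(I+\rho^{-1}V_n)$ on $H_n$, and the final application of Ville's inequality are all correct.
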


The proof of the main result then splits the martingale according to an online spectral truncation of the covariates.
Fix $\rho\ge1$. We maintain a predictable nested sequence of finite-dimensional subspaces
$(\cH_n)_{n\in\N}$, with orthogonal projections $(\Pi_n)_{n\in\N}$.
Starting from $\cH_0=\{0\}$, when $X_n$ is revealed we decompose
\[
  X_n^+ = \Pi_{n-1}X_n
  \spaced{and}
  X_n^- = \Pi_{n-1}^{\perp}X_n.
\]
We then enlarge $\cH_{n-1}$ by adding the eigendirections of the residual covariance
$V_n^-=\sum_{j=1}^n X_j^-\otimes X_j^-$ whose eigenvalues have reached the scale $\rho$. The construction gives, for all $n$,
\[
  \norm{V_n^-}_{\op}\le 2\rho,\qquad
  \dim(\cH_{n-1})\le 3\,\gamma(\rho^{-1}V_n^-),
\]
Correspondingly, write
\[
  M_n^+ = \sum_{j=1}^n Y_jX_j^+,\qquad
  M_n^- = \sum_{j=1}^n Y_jX_j^-,
\]
and let
\[
  Q_n^+ =
  \sum_{j=1}^n \E[Y_j^2\mid\cF_{j-1}]\,X_j^+\otimes X_j^+,
  \qquad
  Q_n^- =
  \sum_{j=1}^n \E[Y_j^2\mid\cF_{j-1}]\,X_j^-\otimes X_j^- .
\]
Now, by the triangle inequality,
\[
  \norm{(Q_n + \rho I)^{-\frac{1}{2}}M_n} \leq \norm{(Q_n + \rho I)^{-\frac{1}{2}}M_n^+} + \norm{(Q_n + \rho I)^{-\frac{1}{2}}M_n^-}\,.
\]

An elementary argument using the cutting point $\rho$ to compare $Q_n$ and $Q_n^+$ in positive-semidefinite order yields that
\[
  \norm{(Q_n+\rho I)^{-\frac12}M_n^+}
  \le
  2\norm{(Q_n^+ + \rho I)^{-\frac12}M_n^+}.
\]

For the tail, we use a different consequence of the same cut.
Since $Q_n+\rho I\succeq \rho I$ and $\norm{V_n^-}_{\op}\le2\rho$,
\[
  \norm{(Q_n+\rho I)^{-\frac12}M_n^-}
  \le
  \frac{\norm{M_n^-}}{\sqrt{\rho}}
  \le
  \sqrt{\frac{\norm{V_n^-+\rho I}_{\op}}{\rho}}\,
  \norm{(V_n^-+\rho I)^{-\frac12}M_n^-}
  \le
  \sqrt{3}\norm{(V_n^-+\rho I)^{-\frac12}M_n^-}.
\]
Applying the predictable support Bernstein inequality (\cref{thm:bernstein-predictable-dim}) to the head and using $\dim(\cH_{n-1})\le 3\,\gamma(\rho^{-1}V_n^-)$, and the Hoeffding inequality (\cref{thm:hoeffding-fixed}) to the tail and taking the union bound, we obtain that with probability at least $1-2e^{-y}$, for all $n \geq 1$,
\[
  \norm{(Q_n+\rho I)^{-\frac12}M_n}
  \lesssim
  \sqrt{\rho}
  +
  \frac{\gamma(\rho^{-1}Q_n^+)+\gamma(\rho^{-1}V_n^-)+y}{\sqrt{\rho}}
  +
  \sqrt{\gamma(\rho^{-1}V_n^-)+y}\,.
\]
From here, bounding $\gamma(\rho^{-1}Q_n^+)$ and $\gamma(\rho^{-1}V_n^-)$ in terms of $\gamma(\rho^{-1}V_n)$ completes the proof.

\subsection{Comparison of concentration result to concurrent results}

Whereas all previously published self-normalised Bernstein-type bounds feature an explicit dependence on the dimension of the Hilbert space \citep{faury2020improved,lee2024unified,ziemann2024vector}, our result depends on the information gain. This dependence may be much smaller than the ambient dimension.

Concurrently to this work, \citet{chugg2025variational,metelli2025generalized} also obtained self-normalised Bernstein-type inequalities. The result of \citet{chugg2025variational} is based on PAC-Bayes methods and is finite-dimensional. The result of \citet{metelli2025generalized} takes a different route, reducing the analysis to one-dimensional Sherman--Morrison updates for the process $Z_{n-1}(\rho)\mapsto Z_n(\rho)$. Both bounds are sharper than \cref{thm:bernstein-fixed} in their variance dependence: their concentration radii involve $\gamma(\rho^{-1}Q_n)$, whereas \cref{thm:bernstein-fixed} is stated in terms of $\gamma(\rho^{-1}V_n)$. The passage from $Q_n$ to $V_n$ in \cref{thm:bernstein-fixed} occurs at the final simplification of the truncation argument. Before this simplification, the proof gives a bound involving
\[
  \gamma(\rho^{-1}Q_n^+) \spaced{and} \gamma(\rho^{-1}V_n^-),
\]
where the second term comes from the residual directions left after the spectral cut. Thus the worst-case covariance appears only through the small-direction part of the decomposition. In problems where the conditional variance is concentrated on a smaller effective subspace, this intermediate form can be substantially sharper than the displayed bound in \cref{thm:bernstein-fixed} suggests.

The applications below use the concentration result only through the resulting self-normalised confidence radius. This makes it possible to substitute sharper compatible Bernstein inequalities without changing the learning-theoretic reductions. We make this explicit in the logistic bandit application.

\subsection{Related work}

Self-normalised martingale inequalities have a long history. Their study was partly motivated by Studentised statistics beyond the Gaussian setting \citep{efron1969student,logan1973limit}, and was later developed into a general theory by \citet{de1999general,de2004self,pena2009self}. The central mechanism is the method of mixtures: suitable exponential supermartingales are integrated against a mixing distribution to produce time-uniform bounds for ratios of the form $A_n/\sqrt{B_n+\rho}$. In sequential least squares, \citet{abbasi2011improved} used this method to obtain a vector-valued self-normalised inequality with subgaussian increments. The same argument extends to reproducing kernel Hilbert spaces, as shown in the thesis of \citet{abbasi2013online}. The Hoeffding-type inequality in \cref{thm:hoeffding-fixed} is an instance of this result. Our proof uses the finite-dimensional Bernstein inequality of \citet{faury2020improved} as an input; the infinite-dimensional step is based on an online spectral truncation. This truncation is closely related to the effective-dimension viewpoint in kernel learning. \citet{zhang2002effective,zhang2005learning} introduced effective data dimension to measure the number of statistically relevant kernel directions at a given regularisation scale; the information-gain functional $\gamma(\rho^{-1}V_n)$ is closely related to that effective dimension. A related but distinct line of work studies dimension-free concentration for norms of vector-valued martingales. Martingale methods for sums of Banach-valued random variables appear in \citet{ledoux1991probability}, and \citet{pinelis1994optimum} proved sharp Hoeffding-, Bennett- and Bernstein-type inequalities for martingales in $2$-smooth Banach spaces, including Hilbert spaces. More recent results refine this direction, for example through scalar-sharp Azuma--Hoeffding constants for Hilbert-valued martingales \citep{luo2022azuma} and empirical Bernstein inequalities for vector-valued martingales \citep{martinez2024empirical}. These inequalities control $\norm{M_n}$ directly, and therefore do not retain the covariance geometry of the covariates.

\section{Applications to logistic regression and bandits}\label{sec:applications}

We consider online logistic regression and bandits in an infinite-dimensional Hilbert space. Let
\[
\mu(u) = 1/(1+e^{-u})\,, \qquad \rV(u) = \mu(u)(1-\mu(u))
\]
denote the logistic mean and variance functions. We suppose a predictable sequence of covariates $(X_n)_{n \in \Np}$ in a compact set $\cX \subset B$, and adapted responses $(Y_n)_{n \in \Np}$ in $[0,1]$, having conditional means
\[
  \E[Y_n \mid \cF_{n-1}] = \mu(\langle f^\star, X_n\rangle) \quad \text{a.s. for a fixed $f^\star \in \cH$}\,.
\]
We assume knowledge of a bound $b > 0$ such that $\norm{f^\star} \leq b$. We do not require $(Y_n)_{n \in \Np}$ to be Bernoulli.

For the theoretical analysis, we define the residual process and its quadratic variation
\[
  M_n = \sum_{j=1}^n \epsilon_j X_j
  \spaced{where}
  \epsilon_j = Y_j-\mu(\langle f^\star,X_j\rangle),
  \qquad
  Q_n = \sum_{j=1}^n \E[\epsilon_j^2\mid\cF_{j-1}]X_j\otimes X_j\,.
\]
In terms of the above residual process, we assume the existence of a concentration radius, as follows.
\begin{assumption}[Concentration radius] In what follows, let $\beta_n(\rho,y)$ be any radius such that the event
\[
  \cC = \bigcap_{n \geq 1}\curlyb[\big]{\norm{(Q_n+\rho I)^{-\frac12}M_n}\le \beta_n(\rho,y)} \spaced{satisfies} \P[\big]{\cC}
  \ge 1-Ce^{-y}\,,
\]
and $n \mapsto \beta_n(\rho, y)$ is nondecreasing (if the latter is not satisfied, replace $\beta_n(\rho,y)$ with its running maximum).
\end{assumption}
For example, one may take $\beta_n(\rho,y)$ to be the upper bound from \cref{thm:bernstein-fixed}, with $C=2$.

Finally, define the logistic information process at $f \in \cH$ by
\[
  A_n(f) = \sum_{j=1}^n \rV(\langle f, X_j\rangle) (X_j \otimes X_j)\,, \spaced{and observe that} Q_n \preceq A_n(f^\star)\,,
\]
with equality when the responses are Bernoulli (where $\preceq$ denotes the usual order on positive semidefinite operators). This follows since Bernoulli random variables have the maximal variance on $[0,1]$.

\subsection{Anytime confidence sequence for logistic regression under adaptive design}

Define the logistic loss function $\ell \colon \R \times [0,1] \to \R$ by
\[
    \ell(u, y) = - y \log(\mu(u)) - (1-y) \log(1 - \mu(u))\,.
\]
For $\rho > 0$, define the cumulative $\rho$-regularised loss and its minimiser the logistic ridge regressor
\[
    \cL_n(f) = \sum_{i=1}^n \ell(\langle f, X_i \rangle, Y_i) + \frac{\rho}{2} \norm{f}^2 \spaced{and} \hat f_n \in \argmin_{f \in \cH} \cL_n(f)\,.
\]
Let $\nabla^2\cL_n(f)$ denote the Hessian operator of $\cL_n$ at $f$, noting that
\[
  \nabla^2 \cL_n(f) = A_n(f) + \rho I\,, \spaced{and define} A^\star_n = A_n(f^\star)\,, \qquad \hat A_n = A_n(\hat f_n)\,.
\]
Our confidence sequence is now given in terms of the above quantities and a concentration radius $\beta_n(\rho, y)$.

\begin{theorem}[Logistic confidence sequences]\label{thm:confidence-sequence}
    Fix $y>0$ and $\rho>0$. Define
    \[
      u_n = \frac{\beta_n(\rho,y)}{\sqrt{\rho}}+b\,, \qquad \omega_n= u_n(6+2u_n^3)\sqrt{\rho}\,, \qquad \alpha_n = u_n(1+u_n)\,.
    \]
    Then, on the event $\cC$, for all $n \geq 1$, we have the inequalities
    \[
      \norm{(A^\star_n + \rho I)^{\frac{1}{2}}(f^\star - \hat f_n)}
        \vee
        \norm{(\hat{A}_n + \rho I)^{\frac{1}{2}}(f^\star - \hat f_n)}
        \leq \omega_n \spaced{and} \norm{f^\star - \hat f_n} \leq \alpha_n\,.
    \]
\end{theorem}

The above inequalities give confidence ellipsoids for $f^\star$ of the form
\[
  \{f \in \cH \colon \norm{(B_n+\rho I)^{\frac{1}{2}}(f - \hat f_n)} \leq \omega_n(\rho,y) \} \spaced{with} B_n \in \{ A^\star_n,  \hat{A}_n\}\,.
\]
If the responses $Y$ are Bernoulli, the resulting confidence ellipsoids are second-order, scaling with either the true variances $A^\star_n$ or the estimates $\hat A_n$. For general responses in $[0,1]$, since $\rV(u) \sim \mu(u)$ as $u \to -\infty$ and $\rV(u) \sim (1-\mu(u))$ as $u \to +\infty$, the confidence sequences scale with the first-order (mean) information.

While our construction is based on the general approach of \citet{faury2020improved}, our confidence sequences are valid uniformly over time, and not just after sufficient exploration in all directions. This is achieved through the use of a lower bound from \citet{sun2019generalized}, previously leveraged by \citet{lee2024unified} for finite-dimensional generalised linear models. Our dependence on $b$ is worse than that of \citet{lee2024unified}; however, the technique of \citet{lee2024unified} is based on uniform distributions on ellipsoids, and thus does not transfer to infinite dimensions; moreover the result of \citet{lee2024unified} requires the responses to be genuinely Bernoulli, and not just bounded in $[0,1]$, making the result inapplicable in, for example, the standard setting of reinforcement learning with bounded rewards \citet{bakhtiari2026eluder}. The confidence sequence is not easily comparable to that of \citet{metelli2025generalized}, who use non-ellipsoidal sets; the relevant comparison is made through the induced bandit regret bound below.

\subsection{Logistic bandits in Hilbert spaces}

We now turn to logistic bandits, an online learning setting motivated by sequential decision-making tasks such as recommendation systems and clinical trials \citep{zelen1969play,li2010contextual,li2017provably}. Given a closed arm-set $\cX \subset B$, the aim is to select arms $X_j \in \cX$ sequentially to minimise cumulative (pseudo-)regret, defined
\[
  R_n = \sum_{j=1}^n \roundb[\big]{\mu(\langle f^\star, x^\star\rangle) - \mu(\langle f^\star, X_j\rangle)}
  \spaced{where} x^\star \in \argmax_{x \in \cX} \langle f^\star, x \rangle\,.
\]

A natural approach, given our previously constructed confidence sequences, is to employ the LinUCB algorithm \citep{abbasi2011improved}. Adopting the definitions of \cref{thm:confidence-sequence}, define the inflated radius
\[
  \bar{\omega}_n = \omega_n(1+\alpha_n)\,,
\]
and the inflated confidence sets
\[
  \bar{\cE}_0 = \curlyb{ f \in \cH \colon \norm{f} \leq b}\,, \qquad  \bar{\cE}_n = \curlyb{f \in \cH \colon \norm{(\hat{A}_n + (\rho + \omega^2_n) I)^{\frac{1}{2}}(f - \hat f_n)}
        \leq \bar{\omega}_n}\,, \qquad n \geq 1\,.
\]
Then, at round $n$, the algorithm chooses
\[
  X_n\in \argmax_{x\in\cX} \max_{f \in \bar{\cE}_{n-1}}\langle f,x\rangle\,.
\]
The upcoming regret bound depends on the variance of the optimal arm.
\[
  \nu_\star = \rV(\langle f^\star, x^\star \rangle)\,.
\]

\begin{theorem}[name={},restate=regretThm]\label{thm:raw_reg}
Fix $y>0$ and $\rho\ge1$. Suppose that the confidence sequence of \cref{thm:confidence-sequence}
holds with probability at least $1-Ce^{-y}$, and replace $\omega_n$ and $\bar\omega_n$ by their
running maxima if necessary. Then the regret of the optimistic rule above satisfies, with probability
at least $1-Ce^{-y}$, for all $n\in\Np$,
    \[
        R_n
        \le
        12\bar\omega_n\sqrt{\nu_\star n \gamma(\rho^{-1}A^\star_n)}
        +
        64e^{b}\bar\omega_n^2 \gamma(\rho^{-1}A^\star_n).
    \]
In particular, when $\beta_n(\rho,y)$ is chosen from \cref{thm:bernstein-fixed}, one may take $C=2$.
\end{theorem}

Like the bound of \citet{metelli2025generalized}, our result depends on $\nu_\star$. For Bernoulli rewards, this is a second-order bound: the regret is small whenever the optimal arm’s variance is low; for general bounded responses, since $\nu_\star = \mu(\langle f^\star, x^\star\rangle)(1-\mu(\langle f^\star, x^\star\rangle))$, this is a \emph{first-order} regret bound---it is small when the mean reward of the optimal arm $\mu(\langle f^\star, x^\star\rangle)$ is close to either boundary of the response interval $[0,1]$.

We now provide a more explicit version of our result and compare directly with \citet{metelli2025generalized}.

\begin{corollary}[Regret with the variance-weighted radius of \citet{metelli2025generalized}]
\label{cor:regret}
Assume in addition that the responses are Bernoulli, so that $Q_n=A^\star_n$.
Instantiate $\beta_n(\rho,y)$ in \cref{thm:confidence-sequence} with the variance-weighted
self-normalised Bernstein inequality of \citet{metelli2025generalized}. Then there exists a
universal constant $C>0$ such that, with probability at least $1-e^{-y}$, for all $n\in\Np$
satisfying
\[
  \rho
  \ge
  C\roundb[\big]{y+\iota_n}
  \roundb[\big]{1+\gamma(\rho^{-1}Q_n)},
\]
the optimistic rule of \cref{thm:raw_reg} satisfies
\[
  R_n
  \le
  C(1+b)^6\sqrt{\rho\,\nu_\star n\,\gamma(\rho^{-1}Q_n)}
  +
  C e^b(1+b)^{12}\rho\,\gamma(\rho^{-1}Q_n).
\]
\end{corollary}

\begin{remark}[Comparison with \citet{metelli2025generalized}]
  For comparison, \citet[Theorem~6.2]{metelli2025generalized}, specialised to the Bernoulli logistic
case, simplified and rewritten in our notation gives the following: there exists a universal constant $C>0$ such that, with
probability at least $1-e^{-y}$, for all $n\in\Np$ satisfying
\[
  \rho
  \ge
  C\roundb[\big]{y+\iota_n}
  \roundb[\big]{1+\gamma(\rho^{-1}V_n)},
\]
their GKB-UCB algorithm satisfies
\[
  R_n
  \le
  C(1+b)^2\sqrt{\rho\,\nu_\star n\,\gamma(\rho^{-1}V_n)}
  +
  C e^b(1+b)^4\rho\,\gamma(\rho^{-1}V_n).
\]
Thus, when \cref{cor:regret} is instantiated with the radius of \citet{metelli2025generalized}, the
two bounds differ in two ways. Our plug-in result replaces the worst-case gain
$\gamma(\rho^{-1}V_n)$ by the variance-weighted gain $\gamma(\rho^{-1}Q_n)$. On the other hand, our confidence-sequence route
pays heavier factors in $b$. This loss comes from us insisting on simple ellipsoidal confidence sets around the logistic ridge
estimator and then inflating the $\hat A_n$-ellipsoid to make optimism easy to state. The algorithm of
\citet{metelli2025generalized} uses a more complex confidence set and optimisation problem, which then avoids
some of this self-concordance-based $b$-factor inflation.
\end{remark}

The final term involving $e^b$ is standard: it represents the difficulty of ensuring adequate exploration across roughly $\gamma(\rho^{-1}Q_n)$ directions of the Hilbert space. See \citet{faury2020improved} for a detailed discussion.

A Hoeffding equivalent of \cref{thm:raw_reg} may be obtained by using \cref{thm:hoeffding-fixed} in place of our Bernstein bound, \cref{thm:bernstein-fixed}. The result is a bound of the same form as in \cref{thm:raw_reg}, but with $\nu_\star$ replaced by its worst-case value $1/4$. Such results were first established by \citet{srinivas2009gaussian}. Tighter regret bounds may be obtained by elimination-based algorithms \citep{valko2013finite} or large-batch algorithms \citep{li2022gaussian}, but it is a question of whether the improvement is due to better algorithms or tighter analysis.

\bibliography{references}
\bibliographystyle{tmlr}

\appendix
\clearpage
\section{Adaptive truncation level (proof of \cref{thm:bernstein})}
\label{sec:adaptive-bounds}

We now derive the adaptive version of the fixed-$\rho$ inequality by a geometric-grid argument.
Recall that
\[
  \gamma^\star_n
  =
  \inf\{\rho\ge1:\rho\ge\gamma(\rho^{-1}V_n)\},
  \qquad
  Z^\star_n=Z_n(\gamma^\star_n),
  \qquad
  \iota_n=1+\log\log(en).
\]

\begin{proof}[Proof of \cref{thm:bernstein}]
Let
\[
  \eta=\frac{21}{20},\qquad
  \rho_h=\eta^{h-1},\qquad
  y_h=y+\log(4h^2),
  \qquad h\in\Np.
\]
For each $h\in\Np$, define the event
\[
  \cE_h =
  \curlyb[\bigg]{\forall n\in\Np,\
  Z_n(\rho_h)
  \le
  \sqrt{\rho_h}
  +
  \frac{78\gamma(\rho_h^{-1}V_n)+4y_h}{\sqrt{\rho_h}}
  +
  \sqrt{18\gamma(\rho_h^{-1}V_n)+6y_h}}\,.
\]
By \cref{thm:bernstein-fixed},
\[
  \P{\cE_h^\complement}\le 2e^{-y_h}
  =
  \frac{e^{-y}}{2h^2}.
\]
Hence, by a union bound,
\[
  \P[\bigg]{\bigcap_{h\ge1}\cE_h}
  \ge
  1-\frac{e^{-y}}{2}\sum_{h=1}^\infty\frac{1}{h^2}
  \ge
  1-e^{-y}.
\]

Work on $\bigcap_{h\ge1}\cE_h$ and fix $n\in\Np$. Write
\[
  r=\gamma^\star_n
  \spaced{and}
  a=y+\iota_n.
\]
Since $V_n$ is finite-rank, the map $\rho\mapsto\gamma(\rho^{-1}V_n)$ is continuous on $(0,\infty)$, so
$r$ is feasible in the definition of $\gamma^\star_n$:
\[
  \gamma(r^{-1}V_n)\le r.
\]
Choose $h\in\Np$ such that
\[
  \rho_h\le r<\eta\rho_h.
\]
Then $\rho_h^{-1}\le \eta r^{-1}$, and using $\gamma(cA)\le c\gamma(A)$ for $c\ge1$ and positive
finite-rank $A$,
\[
  \gamma(\rho_h^{-1}V_n)
  \le
  \eta\gamma(r^{-1}V_n)
  \le
  \eta r.
\]

We next control the grid index. Since $r$ is at most the feasible value $1\vee\gamma(V_n)$,
\[
  r\le 1\vee\gamma(V_n)\le n,
\]
where the final inequality follows from $\tr(V_n)\le n$ and $\log(1+x)\le x$. Since
$\log(21/20)>1/21$, the chosen index satisfies
\[
  h\le 1+21\log n.
\]
Therefore, with $\iota_n=1+\log\log(en)$,
\[
  \log(4h^2)
  \le
  \log\!\roundb[\big]{4(1+21\log n)^2}
  \le
  10\iota_n.
\]
Consequently,
\[
  y_h\le y+10\iota_n\le 10a.
\]

By monotonicity in the regularisation parameter and the event $\cE_h$,
\[
\begin{aligned}
  Z^\star_n
  =
  Z_n(r)
  \le
  Z_n(\rho_h)
  \le
  \sqrt{\rho_h}
  +
  \frac{78\gamma(\rho_h^{-1}V_n)+4y_h}{\sqrt{\rho_h}}
  +
  \sqrt{18\gamma(\rho_h^{-1}V_n)+6y_h}.
\end{aligned}
\]
Using $\rho_h\le r$, $\rho_h\ge r/\eta$, $\gamma(\rho_h^{-1}V_n)\le\eta r$ and $y_h\le10a$, we obtain
\[
\begin{aligned}
  Z^\star_n
  &\le
  \sqrt r
  +
  78\eta^{3/2}\sqrt r
  +
  40\sqrt{\eta}\frac{a}{\sqrt r}
  +
  \sqrt{18\eta r+60a}.
\end{aligned}
\]
Since $\eta=21/20$, the numerical bounds
\[
  78\eta^{3/2}\le 86,\qquad
  40\sqrt\eta\le 42,\qquad
  \sqrt{18\eta r+60a}\le 8\sqrt{r+a}
\]
give
\[
  Z^\star_n
  \le
  95\sqrt{r+a}
  +
  42\frac{a}{\sqrt r}
  \le
  100\roundb[\bigg]{\sqrt{r+a}+\frac{a}{\sqrt r}}.
\]
Substituting back $r=\gamma^\star_n$ and $a=y+\iota_n$ proves the claimed bound for the fixed $n$.
Since $n$ was arbitrary, the conclusion holds uniformly over all $n\in\Np$ on an event of probability at least
$1-e^{-y}$.
\end{proof}

\clearpage
\section{Bernstein-type inequality with predictable support (proof of \cref{thm:bernstein-predictable-dim})}
\label{appendix:predictable-dim}

We restate the theorem here for convenience.

\headBoundThm*

\begin{proof}
We use the finite-dimensional Bernstein inequality of \citet[Theorem~1]{faury2020improved} as a black box.
In our notation, their result implies the following. Let $(\tilde X_n)_{n\in\Np}$ be a predictable
$\R^d$-valued sequence with $\norm{\tilde X_n}\le1$, let $(Y_n)_{n\in\Np}$ be a real-valued martingale
difference sequence with $|Y_n|\le1$, and define
\[
  \tilde M_n=\sum_{j=1}^nY_j\tilde X_j
  \spaced{and}
  \tilde Q_n=\sum_{j=1}^n\E[Y_j^2\mid\cF_{j-1}]\tilde X_j\tilde X_j\tran .
\]
Then, for every $a>0$ and $\rho>0$,
\[
  \P[\bigg]{\exists n\in\Np\colon
  \norm{(\tilde Q_n+\rho I_d)^{-\frac12}\tilde M_n}
  >
  \frac{\sqrt{\rho}}{2}
  +
  \frac{2}{\sqrt{\rho}}\roundb[\big]{\gamma(\rho^{-1}\tilde Q_n)+d\log2+a}}
  \le e^{-a}.
\]
Indeed, \citet[Theorem~1]{faury2020improved} states the same event with the term
\[
  \frac{2}{\sqrt{\rho}}
  \log\!\Big(
  \frac{\det(\tilde Q_n+\rho I_d)^{1/2}\rho^{-d/2}}{\delta}
  \Big)
  +
  \frac{2d\log2}{\sqrt{\rho}},
\]
which is exactly the preceding display after setting $a=\log(1/\delta)$ and using
\[
  \frac12\log\det(I_d+\rho^{-1}\tilde Q_n)=\gamma(\rho^{-1}\tilde Q_n).
\]

We now reduce the predictable-support case to this finite-dimensional result. First note that, without changing
either $M$ or $Q$, we may replace $X_n$ by its projection onto $\cU_n$. Indeed, since
$M_{n-1}\in\cU_{n-1}\subseteq\cU_n$ and $M_n\in\cU_n$, we have $Y_nX_n=M_n-M_{n-1}\in\cU_n$ almost surely.
Thus
\[
  Y_n(I-\Pi_n)X_n=0\quad\text{a.s.},
\]
where $\Pi_n$ is the orthogonal projection onto $\cU_n$. Since $\Pi_n$ and $X_n$ are $\cF_{n-1}$-measurable,
\[
  \E[Y_n^2\mid\cF_{n-1}](I-\Pi_n)X_n=0,
\]
and hence the predictable quadratic variation is also unchanged by this replacement.

For a fixed integer $d\ge1$, define the stopping time
\[
  \tau_d=\inf\{n\in\Np:D_n>d\}.
\]
Equivalently, since $D_n$ is $\cF_{n-1}$-measurable and nondecreasing, the indicator
$\1{j<\tau_d}$ is predictable. Consider the stopped process
\[
  M^{(d)}_n=\sum_{j=1}^n \1{j<\tau_d}Y_jX_j
\]
and its predictable quadratic variation
\[
  Q^{(d)}_n=\sum_{j=1}^n \1{j<\tau_d}\E[Y_j^2\mid\cF_{j-1}]X_j\otimes X_j.
\]
Before time $\tau_d$, the process is supported on subspaces of dimension at most $d$.

Since $\cH$ is separable and the projections onto the nested subspaces $\cU_n$ are predictable, a predictable
Gram--Schmidt construction gives an orthonormal coordinate system that is enlarged only when the predictable
support grows. In the first $d$ coordinates, the stopped covariates form a predictable $\R^d$-valued sequence
$(\tilde X^{(d)}_j)_{j\in\Np}$ with $\norm{\tilde X^{(d)}_j}\le1$. The coordinate representation preserves
inner products, norms and quadratic variations, so the finite-dimensional Faury bound applies to
$M^{(d)}$ and $Q^{(d)}$.

Apply the displayed finite-dimensional bound with
\[
  a_d=y+\log(2d^2).
\]
We obtain
\[
  \P[\bigg]{\exists n\in\Np\colon D_n\le d\ \text{and}\
  \norm{(Q_n+\rho I)^{-\frac12}M_n}
  >
  \frac{\sqrt{\rho}}{2}
  +
  \frac{2}{\sqrt{\rho}}\roundb[\big]{\gamma(\rho^{-1}Q_n)+d\log2+y+\log(2d^2)}}
  \le \frac{e^{-y}}{2d^2}.
\]
Taking a union bound over $d\ge1$ gives failure probability at most $e^{-y}$. On the resulting event, for every $n$ with $D_n\ge1$, choosing $d=D_n$ yields
\[
  \norm{(Q_n+\rho I)^{-\frac12}M_n}
  \le
  \frac{\sqrt{\rho}}{2}
  +
  \frac{2}{\sqrt{\rho}}\roundb[\big]{\gamma(\rho^{-1}Q_n)+D_n\log2+\log(2D_n^2)+y}.
\]
Finally, the elementary bound
\[
  d\log2+\log(2d^2)\le 2d,\qquad d\in\Np,
\]
gives the claimed display. The case $D_n=0$ is trivial since then $M_n=0$.
\end{proof}

\clearpage
\section{Fixed-$\rho$ Bernstein concentration theorem (Proof of \cref{thm:bernstein-fixed})}
\label{app:proof-bernstein-fixed}

We complete the proof of \cref{thm:bernstein-fixed}. The main text already gives the
structure of the argument: split the martingale according to a predictable spectral cut,
apply \cref{thm:bernstein-predictable-dim} to the head and \cref{thm:hoeffding-fixed} to the tail,
and then compare the resulting information-gain terms to $\gamma(\rho^{-1}V_n)$.
Here we record this reduction with constants, and then prove the truncation and comparison estimates
used in the reduction.

Fix $\rho\ge1$ and write
\[
  \Gamma_n:=\gamma(\rho^{-1}V_n).
\]
Let $(\cH_n)_{n\in\N}$ be the predictable truncation constructed in \cref{lem:online-truncation}
below, and let $\Pi_n$ be the orthogonal projection onto $\cH_n$. Define
\[
  X_j^+ = \Pi_{j-1}X_j,
  \qquad
  X_j^- = \Pi_{j-1}^{\perp}X_j,
\]
and
\[
  M_n^+ = \sum_{j=1}^n Y_jX_j^+,\qquad
  M_n^- = \sum_{j=1}^n Y_jX_j^-.
\]
Also set
\[
  Q_n^+
  =
  \sum_{j=1}^n\E[Y_j^2\mid\cF_{j-1}]X_j^+\otimes X_j^+,
  \qquad
  Q_n^-
  =
  \sum_{j=1}^n\E[Y_j^2\mid\cF_{j-1}]X_j^-\otimes X_j^-,
\]
and
\[
  V_n^-=\sum_{j=1}^nX_j^-\otimes X_j^-.
\]

The deterministic comparison in \cref{lem:head-tail-comparison} gives, for every $n\in\Np$,
\[
  \norm{(Q_n+\rho I)^{-\frac12}M_n}
  \le
  2\norm{(Q_n^+ + \rho I)^{-\frac12}M_n^+}
  +
  \sqrt{3}\norm{(V_n^- + \rho I)^{-\frac12}M_n^-}.
\]
The head martingale $M^+$ has admissible predictable support $\cU_n=\cH_{n-1}$, hence by
\cref{lem:online-truncation},
\[
  D_n:=\dim(\cU_n)=\dim(\cH_{n-1})\le 6\Gamma_n.
\]
Moreover, by \cref{lem:gamma-comparisons},
\[
  \gamma(\rho^{-1}Q_n^+)\le \frac{15}{2}\Gamma_n.
\]
Applying \cref{thm:bernstein-predictable-dim} to $M^+$ yields
\[
  \P[\bigg]{\exists n\in\Np\colon
  \norm{(Q_n^+ + \rho I)^{-\frac12}M_n^+}
  >
  \frac12\sqrt{\rho}
  +
  \frac{39\Gamma_n+2y}{\sqrt{\rho}}}
  \le e^{-y}.
\]
Indeed,
\[
  2\roundb[\big]{\gamma(\rho^{-1}Q_n^+)+2D_n+y}
  \le
  2\roundb[\big]{\frac{15}{2}\Gamma_n+12\Gamma_n+y}
  =
  39\Gamma_n+2y.
\]

For the tail term, \cref{thm:hoeffding-fixed} applied to $M^-$ gives
\[
  \P[\bigg]{\exists n\in\Np\colon
  \norm{(V_n^-+\rho I)^{-\frac12}M_n^-}
  >
  \sqrt{2\roundb[\big]{\gamma(\rho^{-1}V_n^-)+y}}}
  \le e^{-y}.
\]
Since \cref{lem:online-truncation} gives $\gamma(\rho^{-1}V_n^-)\le 3\Gamma_n$, this implies
\[
  \P[\bigg]{\exists n\in\Np\colon
  \norm{(V_n^-+\rho I)^{-\frac12}M_n^-}
  >
  \sqrt{6\Gamma_n+2y}}
  \le e^{-y}.
\]
Combining the last two displays with the head--tail comparison and taking a union bound gives, with
probability at least $1-2e^{-y}$, for all $n\in\Np$,
\[
  \norm{(Q_n+\rho I)^{-\frac12}M_n}
  \le
  \sqrt{\rho}
  +
  \frac{78\Gamma_n+4y}{\sqrt{\rho}}
  +
  \sqrt{18\Gamma_n+6y}.
\]
This is the conclusion of \cref{thm:bernstein-fixed}. It remains only to prove the deterministic ingredients
used above.

\begin{lemma}[Online spectral truncation]\label{lem:online-truncation}
Fix $\rho\ge1$. There exists a predictable nested sequence $(\cH_n)_{n\in\N}$ of finite-dimensional
subspaces of $\cH$, with orthogonal projections $(\Pi_n)_{n\in\N}$, such that, defining
\[
  X_j^-=\Pi_{j-1}^{\perp}X_j
  \spaced{and}
  V_n^-=\sum_{j=1}^n X_j^-\otimes X_j^-,
\]
the following hold for all $n\in\Np$:
\[
  \norm{V_n^-}_{\op}\le \rho+1,\qquad
  \tr(V_n^-)\le 6\rho\,\gamma(\rho^{-1}V_n),\qquad
  \dim(\cH_n)\le 6\,\gamma(\rho^{-1}V_n),
\]
and
\[
  \gamma(\rho^{-1}V_n^-)\le 3\,\gamma(\rho^{-1}V_n).
\]
\end{lemma}

\begin{proof}
Set $\cH_0=\{0\}$. Having constructed $\cH_{n-1}$, let $\Pi_{n-1}$ be the orthogonal projection
onto $\cH_{n-1}$ and set
\[
  X_n^-=\Pi_{n-1}^{\perp}X_n
  \spaced{and}
  V_n^-=V_{n-1}^-+X_n^-\otimes X_n^-.
\]
Since $X_n$ and $\Pi_{n-1}$ are $\cF_{n-1}$-measurable, so is $V_n^-$. Let $\cE_n$ be the spectral
subspace of the restriction of $V_n^-$ to $\cH_{n-1}^{\perp}$ corresponding to eigenvalues at least $\rho$,
and define
\[
  \cH_n=\cH_{n-1}\oplus \cE_n.
\]
This gives a predictable nested sequence.

By induction, $V_n^-$ leaves both $\cH_n$ and $\cH_n^\perp$ invariant, and
\[
  \norm{\Pi_n^\perp V_n^-\Pi_n^\perp}_{\op}<\rho.
\]
Whenever a direction is added to $\cH_n$, its eigenvalue in $V_n^-$ is at least $\rho$ and at most $\rho+1$:
before the rank-one update, the residual operator on $\cH_{n-1}^{\perp}$ has norm strictly smaller than
$\rho$, and the update has operator norm at most one. Once a direction has been added, all later residuals
are orthogonal to it, so its eigenvalue in $V_t^-$ is frozen for $t\ge n$. Hence
\[
  \norm{V_n^-}_{\op}\le \rho+1
  \spaced{and}
  \rho\dim(\cH_n)\le \tr(V_n^-).
\]

It remains to control the residual trace. Define the ridge leverage scores
\[
  q_j=\langle X_j,(\rho I+V_{j-1})^{-1}X_j\rangle.
\]
Since $\rho\ge1$ and $\norm{X_j}\le1$, we have $q_j\le1$. The determinant update identity and the inequality
$x\le \log(1+x)/\log2$ for $x\in[0,1]$ give
\[
  \sum_{j=1}^n q_j
  \le
  \frac{1}{\log2}\sum_{j=1}^n\log(1+q_j)
  =
  \frac{2}{\log2}\gamma(\rho^{-1}V_n)
  \le
  3\gamma(\rho^{-1}V_n).
\]

We claim that $\norm{X_j^-}^2\le 2\rho q_j$. Let $R=\Pi_{j-1}^{\perp}$. Since $\cH_{j-1}$ is generated
by previously selected residual directions,
\[
  RV_{j-1}R=RV_{j-1}^-R\preceq \rho R.
\]
Using the Schur complement formula for $(\rho I+V_{j-1})^{-1}$ in the decomposition
$\cH_{j-1}\oplus\cH_{j-1}^{\perp}$,
\[
  q_j
  \ge
  \langle RX_j,(\rho R+RV_{j-1}R)^{-1}RX_j\rangle
  \ge
  \frac{\norm{X_j^-}^2}{2\rho}.
\]
Therefore,
\[
  \tr(V_n^-)
  =
  \sum_{j=1}^n\norm{X_j^-}^2
  \le
  2\rho\sum_{j=1}^n q_j
  \le
  6\rho\,\gamma(\rho^{-1}V_n).
\]
This proves the trace and dimension bounds.

Finally, since $\log(1+x)\le x$,
\[
  \gamma(\rho^{-1}V_n^-)
  =
  \frac12\sum_k\log(1+\lambda_k(V_n^-)/\rho)
  \le
  \frac{\tr(V_n^-)}{2\rho}
  \le
  3\,\gamma(\rho^{-1}V_n).
\]
\end{proof}

\begin{lemma}[Head--tail comparison]\label{lem:head-tail-comparison}
Fix $\rho\ge1$. Let $X_j=X_j^++X_j^-$ be a predictable decomposition and define
\[
  Q_n^+ =
  \sum_{j=1}^n \E[Y_j^2\mid\cF_{j-1}]X_j^+\otimes X_j^+,
  \qquad
  Q_n^- =
  \sum_{j=1}^n \E[Y_j^2\mid\cF_{j-1}]X_j^-\otimes X_j^-,
\]
and
\[
  M_n^+=\sum_{j=1}^nY_jX_j^+,\qquad
  M_n^-=\sum_{j=1}^nY_jX_j^-,
  \qquad
  V_n^-=\sum_{j=1}^nX_j^-\otimes X_j^-.
\]
Assume $\norm{V_n^-}_{\op}\le 2\rho$. Then
\[
  \norm{(Q_n+\rho I)^{-\frac12}(M_n^+ + M_n^-)}
  \le
  2\norm{(Q_n^++\rho I)^{-\frac12}M_n^+}
  +
  \sqrt{3}\norm{(V_n^-+\rho I)^{-\frac12}M_n^-}.
\]
\end{lemma}

\begin{proof}
For scalars $a,b\in\R$,
\[
  (a+b)^2+\frac12 b^2\ge \frac13 a^2.
\]
Applying this with $a=\langle z,X_j^+\rangle$ and $b=\langle z,X_j^-\rangle$, multiplying by
$\E[Y_j^2\mid\cF_{j-1}]$, and summing over $j$, gives
\[
  Q_n+\frac12 Q_n^-\succeq \frac13 Q_n^+.
\]
Since $Q_n^-\preceq V_n^-\preceq 2\rho I$, we have $\rho I\succeq Q_n^-/2$, and hence
\[
  Q_n+\rho I\succeq \frac13 Q_n^+.
\]
Also $Q_n+\rho I\succeq \rho I$. Taking the convex combination of these two lower bounds with weights
$3/4$ and $1/4$ yields
\[
  Q_n+\rho I\succeq \frac14(Q_n^++\rho I).
\]
Therefore,
\[
  \norm{(Q_n+\rho I)^{-\frac12}M_n^+}
  \le
  2\norm{(Q_n^++\rho I)^{-\frac12}M_n^+}.
\]

For the tail term, $Q_n+\rho I\succeq \rho I$, so
\[
  \norm{(Q_n+\rho I)^{-\frac12}M_n^-}
  \le
  \frac{\norm{M_n^-}}{\sqrt{\rho}}.
\]
Since $\norm{V_n^-}_{\op}\le 2\rho$,
\[
  \norm{M_n^-}
  \le
  \sqrt{\norm{V_n^-+\rho I}_{\op}}\,
  \norm{(V_n^-+\rho I)^{-\frac12}M_n^-}
  \le
  \sqrt{3\rho}\,
  \norm{(V_n^-+\rho I)^{-\frac12}M_n^-}.
\]
The result follows from the triangle inequality.
\end{proof}

\begin{lemma}[Information-gain comparison]\label{lem:gamma-comparisons}
With the notation of \cref{lem:online-truncation}, let
\[
  X_j^+=\Pi_{j-1}X_j,
  \qquad
  Q_n^+=\sum_{j=1}^n\E[Y_j^2\mid\cF_{j-1}]X_j^+\otimes X_j^+.
\]
Then
\[
  \gamma(\rho^{-1}Q_n^+)\le \frac{15}{2}\,\gamma(\rho^{-1}V_n).
\]
\end{lemma}

\begin{proof}
Since $\E[Y_j^2\mid\cF_{j-1}]\le1$, we have $Q_n^+\preceq V_n^+$, where
\[
  V_n^+=\sum_{j=1}^nX_j^+\otimes X_j^+.
\]
Also $X_j^+=X_j-X_j^-$, so for every $\epsilon>0$ and every $u\in\cH$,
\[
  \langle u,X_j^+\rangle^2
  \le
  (1+\epsilon)\langle u,X_j\rangle^2
  +(1+\epsilon^{-1})\langle u,X_j^-\rangle^2.
\]
Thus
\[
  V_n^+\preceq (1+\epsilon)V_n+(1+\epsilon^{-1})V_n^-.
\]
Using the determinant inequality
\[
  \det(I+A+B)\le \det(I+A)\det(I+B)
\]
for finite-rank positive operators $A,B$, together with $\gamma(aA)\le a\gamma(A)$ for $a\ge1$, gives
\[
  \gamma(\rho^{-1}Q_n^+)
  \le
  (1+\epsilon)\gamma(\rho^{-1}V_n)
  +(1+\epsilon^{-1})\gamma(\rho^{-1}V_n^-).
\]
Taking $\epsilon=\sqrt3$ and using \cref{lem:online-truncation},
\[
  \gamma(\rho^{-1}Q_n^+)
  \le
  \roundb[\big]{1+\sqrt3+3(1+1/\sqrt3)}\gamma(\rho^{-1}V_n)
  =
  (4+2\sqrt3)\gamma(\rho^{-1}V_n)
  \le
  \frac{15}{2}\gamma(\rho^{-1}V_n). \qedhere
\]
\end{proof}

\clearpage
\section{Confidence sequence for logistic regression (proof of \cref{thm:confidence-sequence})}
\label{appendix:confidence-sequence}

Observe first $\cL_n$ is $(2,1)$-generalised self-concordant in the sense of \citet{sun2019generalized}. Indeed,
\[
  \ell(u,y)=\log(1+e^u)-yu,
\]
so
\[
  \ell''(u,y)=\rV(u)
  \spaced{and}
  \ell'''(u,y)=\rV(u)(1-2\mu(u)).
\]
Thus $|\ell'''(u,y)|\le \ell''(u,y)$ for every $u\in\R$ and $y\in[0,1]$.
Consequently, for
\[
  \cL_n(f)=\sum_{i=1}^n \ell(\langle f,X_i\rangle,Y_i)+\frac{\rho}{2}\norm{f}^2,
\]
and for any $f,h,v\in\cH$,
\[
  D^3\cL_n(f)[h,v,v]
  =
  \sum_{i=1}^n
  \rV'(\langle f,X_i\rangle)
  \langle h,X_i\rangle
  \langle v,X_i\rangle^2.
\]
Using $\norm{X_i}\le1$ and the preceding one-dimensional bound,
\[
  \abs{D^3\cL_n(f)[h,v,v]}
  \le
  \norm{h}
  \sum_{i=1}^n
  \rV(\langle f,X_i\rangle)
  \langle v,X_i\rangle^2
  \le
  \norm{h}\,D^2\cL_n(f)[v,v],
\]
where the regularisation contributes no third derivative and only increases the second derivative. This proves that $\cL_n$ is $(2,1)$-generalised self-concordant, which will allow us to apply the following lemma,  extracted from \citet[Corollary~2 and Proposition~10]{sun2019generalized}. The results of \citet{sun2019generalized} are written for functions on $\Rd$, but the arguments carry over without modification to separable Hilbert spaces.

\begin{lemma}[Generalized self-concordance comparison]\label{lem:sc}
Let $F:\cH\to\R$ be a $(2,1)$-generalised self-concordant function. For $x,y\in\cH$, set
$t=\norm{y-x}$ and
\[
  G(x,y)=\int_0^1\nabla^2F(x+\tau(y-x))\dif\tau.
\]
Then
\[
  \upsilon(-t)\nabla^2F(x)
  \preceq
  G(x,y)
  \preceq
  \upsilon(t)\nabla^2F(x),
  \qquad
  \upsilon(t)=\frac{e^t-1}{t},
\]
with the convention $\upsilon(0)=1$.
\end{lemma}

\begin{proof}[Proof of \cref{thm:confidence-sequence}]
Fix $n\in\Np$ and write
\[
  H_n(f):=A_n(f)+\rho I,\qquad
  H^\star_n:=H_n(f^\star)=A^\star_n+\rho I,\qquad
  \hat H_n:=H_n(\hat f_n)=\hat A_n+\rho I.
\]
We also set
\[
  \Delta_n:=f^\star-\hat f_n
  \spaced{and}
  G_n:=\int_0^1 H_n(\hat f_n+\tau\Delta_n)\dif\tau.
\]

By \cref{lem:sc} and $\upsilon(-t)=(1-e^{-t})/t\ge 1/(1+t)$,
\begin{equation}\label{eq:logistic-hessian-endpoint-comparison}
  H^\star_n\preceq (1+\norm{\Delta_n})G_n
  \spaced{and}
  \hat H_n\preceq (1+\norm{\Delta_n})G_n.
\end{equation}
The first inequality follows by applying \cref{lem:sc} with the segment started at $f^\star$, and the second
with the segment started at $\hat f_n$.

We now relate estimation error to the score at $f^\star$. By the fundamental theorem of calculus for the gradient,
\[
  \nabla\cL_n(f^\star)-\nabla\cL_n(\hat f_n)=G_n\Delta_n.
\]
Since $\hat f_n$ minimises the $\rho$-regularised loss, $\nabla\cL_n(\hat f_n)=0$. Moreover,
\[
  \nabla\cL_n(f^\star)
  =
  \sum_{i=1}^n(\mu(\langle f^\star,X_i\rangle)-Y_i)X_i+\rho f^\star
  =
  \rho f^\star-M_n,
\]
where $M_n=\sum_{i=1}^n\epsilon_iX_i$ is the residual martingale from the main text. Define
\[
  Z_n:=\rho f^\star-M_n
  \spaced{and}
  q_n:=\norm{(H^\star_n)^{-\frac12}Z_n}.
\]
Thus
\begin{equation}\label{eq:score-identity}
  G_n\Delta_n=Z_n.
\end{equation}

The next few displays are deterministic. Put
\[
  a_n:=\norm{G_n^{\frac12}\Delta_n}.
\]
Using \eqref{eq:score-identity},
\[
  a_n^2
  =
  \langle \Delta_n,G_n\Delta_n\rangle
  =
  \langle Z_n,G_n^{-1}Z_n\rangle.
\]
By \eqref{eq:logistic-hessian-endpoint-comparison}, $G_n^{-1}\preceq(1+\norm{\Delta_n})(H^\star_n)^{-1}$.
Since $G_n\succeq\rho I$, we also have $\norm{\Delta_n}\le a_n/\sqrt{\rho}$. Therefore
\[
  a_n^2
  \le
  \roundb[\bigg]{1+\frac{a_n}{\sqrt{\rho}}}q_n^2.
\]
Equivalently,
\[
  a_n^2\le a_n\frac{q_n^2}{\sqrt{\rho}}+q_n^2,
\]
and the elementary implication $x^2\le xb+c\Rightarrow x\le b+\sqrt c$ gives
\begin{equation}\label{eq:an-bound}
  a_n\le q_n\roundb[\bigg]{1+\frac{q_n}{\sqrt{\rho}}}.
\end{equation}

Next let $K_n$ denote either $H^\star_n$ or $\hat H_n$, and set
\[
  w_n(K_n):=\norm{K_n^{\frac12}\Delta_n}.
\]
By \eqref{eq:logistic-hessian-endpoint-comparison},
\[
  w_n(K_n)^2
  \le
  (1+\norm{\Delta_n})a_n^2.
\]
Since $K_n\succeq\rho I$, $\norm{\Delta_n}\le w_n(K_n)/\sqrt{\rho}$, and hence
\[
  w_n(K_n)^2
  \le
  w_n(K_n)\frac{a_n^2}{\sqrt{\rho}}+a_n^2.
\]
Applying the same elementary implication again yields
\begin{equation}\label{eq:wn-bound}
  w_n(K_n)\le a_n+\frac{a_n^2}{\sqrt{\rho}}.
\end{equation}
Let
\[
  r_n:=\frac{q_n}{\sqrt{\rho}}.
\]
Combining \eqref{eq:an-bound} and \eqref{eq:wn-bound} gives
\[
  w_n(K_n)
  \le
  q_n(1+r_n)\roundb[\big]{1+r_n(1+r_n)}
  =
  q_n(1+2r_n+2r_n^2+r_n^3).
\]
Since $1+2r+2r^2+r^3\le 6+2r^3$ for every $r\ge0$, we obtain
\begin{equation}\label{eq:hessian-error-deterministic}
  \norm{(H^\star_n)^{\frac12}\Delta_n}
  \vee
  \norm{\hat H_n^{\frac12}\Delta_n}
  \le
  q_n(6+2r_n^3).
\end{equation}
Also, from \eqref{eq:an-bound} and $G_n\succeq\rho I$,
\begin{equation}\label{eq:ambient-error-deterministic}
  \norm{\Delta_n}
  \le
  \frac{a_n}{\sqrt{\rho}}
  \le
  r_n(1+r_n).
\end{equation}

It remains to control $q_n$. We first compare $Q_n$ and $A^\star_n$. Since $0\le Y_i\le1$ and
$p_i:=\mu(\langle f^\star,X_i\rangle)$ is the conditional mean of $Y_i$,
\[
  \E[\epsilon_i^2\mid\cF_{i-1}]
  =
  \E[Y_i^2\mid\cF_{i-1}]-p_i^2
  \le
  p_i-p_i^2
  =
  \rV(\langle f^\star,X_i\rangle).
\]
Thus $Q_n\preceq A^\star_n$, and so $Q_n+\rho I\preceq H^\star_n$. Since inversion reverses the
positive-semidefinite order,
\[
  \norm{(H^\star_n)^{-\frac12}M_n}
  \le
  \norm{(Q_n+\rho I)^{-\frac12}M_n}.
\]
Therefore, by the triangle inequality and $\norm{f^\star}\le b$,
\[
  q_n
  =
  \norm{(H^\star_n)^{-\frac12}(\rho f^\star-M_n)}
  \le
  \norm{(Q_n+\rho I)^{-\frac12}M_n}+\sqrt{\rho}\,b.
\]
On the event $\cC$ from the main text, this gives, simultaneously for all $n\in\Np$,
\[
  r_n=\frac{q_n}{\sqrt{\rho}}
  \le
  \frac{\beta_n(\rho,y)}{\sqrt{\rho}}+b
  =
  u_n.
\]
Substituting this into \eqref{eq:hessian-error-deterministic} and \eqref{eq:ambient-error-deterministic} yields,
on $\cC$ and uniformly over $n\in\Np$,
\[
  \norm{(A^\star_n+\rho I)^{\frac12}(f^\star-\hat f_n)}
  \vee
  \norm{(\hat A_n+\rho I)^{\frac12}(f^\star-\hat f_n)}
  \le
  \sqrt{\rho}\,u_n(6+2u_n^3)
  =
  \omega_n,
\]
and
\[
  \norm{f^\star-\hat f_n}
  \le
  u_n(1+u_n)
  =
  \alpha_n.
\]
This proves \cref{thm:confidence-sequence}.
\end{proof}

\clearpage
\section{Regret bound for logistic bandits (proof of \cref{thm:raw_reg})}
\label{appendix:regret}

\regretThm*

Work on the event where the confidence sequence of \cref{thm:confidence-sequence} holds for all times.
Recall that
\[
  A^\star_n=A_n(f^\star),
  \qquad
  \hat A_n=A_n(\hat f_n),
\]
and define
\[
  H^\star_n:=A^\star_n+\rho I,
  \qquad
  \bar H_n:=\hat A_n+(\rho+\omega_n^2)I,
  \qquad
  \Gamma^\star_n:=\gamma(\rho^{-1}A^\star_n).
\]
On the confidence event, for every $n\in\Np$,
\[
  \norm{(H^\star_n)^{\frac12}(f^\star-\hat f_n)}
  \vee
  \norm{(\hat A_n+\rho I)^{\frac12}(f^\star-\hat f_n)}
  \le \omega_n,
  \qquad
  \norm{f^\star-\hat f_n}\le \alpha_n.
\]
It follows that $f^\star\in\bar{\cE}_n$ for every $n\ge1$, since
\[
  \norm{\bar H_n^{\frac12}(f^\star-\hat f_n)}^2
  =
  \norm{(\hat A_n+\rho I)^{\frac12}(f^\star-\hat f_n)}^2
  +\omega_n^2\norm{f^\star-\hat f_n}^2
  \le
  \omega_n^2(1+\alpha_n^2)
  \le
  \bar\omega_n^2.
\]
Also $f^\star\in\bar{\cE}_0$ by assumption.

We shall use the following curvature comparison.

\begin{lemma}[Curvature comparison]\label{lem:curvature-comparison}
For every $n\in\Np$,
\begin{equation*}
  H^\star_n \preceq 2\bar H_n .
\end{equation*}
\end{lemma}
\begin{proof}
    Fix $z\in\cH$ and write
\[
  \Delta=f^\star-\hat f_n,\qquad
  a_i=\langle \Delta,X_i\rangle,\qquad
  b_i=\langle z,X_i\rangle,
\]
and
\[
  v_i^\star=\rV(\langle f^\star,X_i\rangle),
  \qquad
  \hat v_i=\rV(\langle \hat f_n,X_i\rangle).
\]
The logistic variance satisfies
\[
  \abs[\big]{(\log \rV)'(u)}=\abs[\big]{1-2\mu(u)}\le1.
\]
Hence, for any $r,s\in\R$,
\[
  \abs{\rV(r)-\rV(s)}
  \le
  \frac{|r-s|}{2}\{\rV(r)+\rV(s)\}.
\]
Indeed, applying the preceding log-Lipschitz bound to $x=\rV(r)$ and $y=\rV(s)$ gives
$\abs{\log x-\log y}\le |r-s|$, and therefore
\[
  \frac{|x-y|}{x+y}
  \le \tanh\!\left(\frac{|r-s|}{2}\right)
  \le \frac{|r-s|}{2}.
\]
Let
\[
  s_z=\sum_{i=1}^n v_i^\star b_i^2,
  \qquad
  h_z=\sum_{i=1}^n \hat v_i b_i^2.
\]
Using the scalar comparison above and Cauchy--Schwarz,
\[
\begin{aligned}
  s_z
  \le
  h_z+\frac12\sum_{i=1}^n |a_i|(v_i^\star+\hat v_i)b_i^2
  \le
  h_z+\frac12
  \left(\sum_{i=1}^n(v_i^\star+\hat v_i)a_i^2\right)^{1/2}
  \left(\sum_{i=1}^n(v_i^\star+\hat v_i)b_i^4\right)^{1/2}.
\end{aligned}
\]
By the two confidence-sequence inequalities,
\[
  \sum_{i=1}^n(v_i^\star+\hat v_i)a_i^2\le 2\omega_n^2,
\]
and since $\norm{X_i}\le1$,
\[
  \sum_{i=1}^n(v_i^\star+\hat v_i)b_i^4
  \le
  \norm{z}^2(s_z+h_z).
\]
Thus
\[
  s_z\le h_z+\frac{\omega_n}{\sqrt2}\norm{z}\sqrt{s_z+h_z}.
\]
Put $c=\omega_n\norm{z}/\sqrt2$ and $w=\sqrt{s_z+h_z}$. Then
$w^2\le 2h_z+cw$, and hence $w\le c+\sqrt{2h_z}$. Consequently,
\[
  s_z
  \le
  h_z+c(c+\sqrt{2h_z})
  \le
  2h_z+\frac32c^2
  =
  2h_z+\frac34\omega_n^2\norm{z}^2.
\]
Adding $\rho\norm{z}^2$ to both sides gives
\[
  \langle z,H^\star_n z\rangle
  \le
  2\langle z,\hat A_n z\rangle
  +(\rho+\tfrac34\omega_n^2)\norm{z}^2
  \le
  2\langle z,\bar H_n z\rangle. \qedhere
\]
\end{proof}

\begin{proof}[Proof of \cref{thm:raw_reg}]
For each round $j$, define
\[
  u^\star=\langle f^\star,x^\star\rangle,\qquad
  u^\star_j=\langle f^\star,X_j\rangle,\qquad
  r_j=\mu(u^\star)-\mu(u^\star_j),
  \qquad
  g_j=u^\star-u^\star_j.
\]
Since $\mu$ is increasing, $r_j\ge0$ and $g_j\ge0$.

We first control the linear gap $g_j$. For $j\ge2$, since $f^\star\in\bar{\cE}_{j-1}$ and $X_j$ is chosen
optimistically,
\[
  u^\star
  \le
  \sup_{f\in\bar{\cE}_{j-1}}\langle f,X_j\rangle.
\]
On the other hand, again because $f^\star\in\bar{\cE}_{j-1}$,
\[
  \langle f^\star,X_j\rangle
  \ge
  \langle \hat f_{j-1},X_j\rangle
  -
  \bar\omega_{j-1}\norm{\bar H_{j-1}^{-\frac12}X_j}.
\]
The supremum over $\bar{\cE}_{j-1}$ is
\[
  \langle \hat f_{j-1},X_j\rangle
  +
  \bar\omega_{j-1}\norm{\bar H_{j-1}^{-\frac12}X_j}.
\]
Therefore,
\[
  g_j
  \le
  2\bar\omega_{j-1}\norm{\bar H_{j-1}^{-\frac12}X_j},
  \qquad j\ge2.
\]
For $j=1$, the same bound below follows from the crude estimate
$g_1\le2b\norm{X_1}$ and the fact that $\bar\omega_n\ge \sqrt{\rho}\,b$.

Let
\[
  q_j
  :=
  \rV(u^\star_j)\norm{(H^\star_{j-1})^{-\frac12}X_j}^2,
  \qquad j\in\Np,
\]
where $H^\star_0=\rho I$. By \cref{lem:curvature-comparison},
$\bar H_{j-1}^{-1}\preceq 2(H^\star_{j-1})^{-1}$ for $j\ge2$. Since $\bar\omega_j$ is nondecreasing on the
event under consideration, we get, for all $n\in\Np$,
\[
  \sum_{j=1}^n \rV(u^\star_j)g_j^2
  \le
  8\bar\omega_n^2\sum_{j=1}^n q_j.
\]
The usual elliptical-potential argument applied to
\[
  A^\star_j=A^\star_{j-1}+\rV(u^\star_j)X_j\otimes X_j
\]
gives
\[
  \sum_{j=1}^n q_j
  \le
  2\sum_{j=1}^n\log(1+q_j)
  =
  2\log\det(I+\rho^{-1}A^\star_n)
  =
  4\Gamma^\star_n,
\]
where we used $q_j\le1$. Hence, with
\[
  B_n:=\sum_{j=1}^n \rV(u^\star_j)g_j^2,
\]
we have established
\begin{equation}\label{eq:Bn-bound-logistic}
  B_n\le 32\bar\omega_n^2\Gamma^\star_n.
\end{equation}

We now convert this width bound into regret. Taylor's theorem gives, for every $j$,
\[
  r_j
  =
  \mu(u^\star)-\mu(u^\star_j)
  \le
  \rV(u^\star_j)g_j+\frac18g_j^2,
\]
because $\mu''(u)\le1/4$ for all $u\in\R$. Summing the first term and applying Cauchy--Schwarz gives
\[
  \sum_{j=1}^n \rV(u^\star_j)g_j
  \le
  \left(\sum_{j=1}^n \rV(u^\star_j)\right)^{1/2}B_n^{1/2}.
\]
Let $p_j=\mu(u^\star_j)$ and $p^\star=\mu(u^\star)$. Since $p_j\le p^\star$ and
$\abs{p_j(1-p_j)-p^\star(1-p^\star)}\le p^\star-p_j$, we have
\[
  \rV(u^\star_j)\le \nu_\star+r_j.
\]
Therefore,
\[
  \sum_{j=1}^n \rV(u^\star_j)\le n\nu_\star+R_n.
\]

For the second-order Taylor term, note that $|u^\star_j|\le b$, and hence
\[
  \frac{1}{\rV(u^\star_j)}
  =
  2+e^{u^\star_j}+e^{-u^\star_j}
  \le 4e^b.
\]
Thus
\[
  \sum_{j=1}^n g_j^2
  \le
  4e^bB_n.
\]
Combining the last displays,
\[
  R_n
  \le
  \sqrt{(n\nu_\star+R_n)B_n}
  +\frac12e^bB_n
  \le
  \sqrt{n\nu_\star B_n}
  +\sqrt{R_nB_n}
  +\frac12e^bB_n.
\]
Using $\sqrt{R_nB_n}\le R_n/2+B_n/2$ and $e^b\ge1$, we obtain
\[
  R_n
  \le
  2\sqrt{n\nu_\star B_n}
  +2e^bB_n.
\]
Finally, substituting \eqref{eq:Bn-bound-logistic} yields
\[
  R_n
  \le
  2\sqrt{32}\,\bar\omega_n\sqrt{\nu_\star n\Gamma^\star_n}
  +
  64e^b\bar\omega_n^2\Gamma^\star_n
  \le
  12\bar\omega_n\sqrt{\nu_\star n\Gamma^\star_n}
  +
  64e^b\bar\omega_n^2\Gamma^\star_n.
\]
This holds simultaneously for all $n\in\Np$ on the confidence event, whose probability is at least
$1-Ce^{-y}$ by assumption.
\end{proof}

\end{document}